\newtheorem{theorem}{Theorem}
\newtheorem{corollary}[theorem]{Corollary}
\newtheorem{remark}[theorem]{Remark}
\def\qed{\vbox{\hrule
 \hbox{\vrule\hbox to 5pt{\vbox to 8pt{\vfil}\hfil}\vrule}\hrule}}
\journal{xxxxxxxxxxxx}
\begin{document}

\begin{frontmatter}

\title{New lower bounds for the energy of matrices and graphs}

\author{Enide Andrade}
\ead{enide@ua.pt}
\address{CIDMA-Center for Research and Development in Mathematics and Applications
         Department of Mathematics, University of Aveiro, 3810-193, Aveiro, Portugal.}
\author{Juan R. Carmona}
\address{Facultad de Ciencias -- Instituto de Ciencias F\'{i}sicas y Matem\'{a}ticas\\
 Universidad Austral de Chile, Independencia 631 \\ Valdivia - Chile. }
\ead{juan.carmona@uach.cl}

\author{Geraldine Infante}
\ead{geraldine.infante@alumnos.ucn.cl}
\address{Departamento de Matem\'{a}ticas, Universidad Cat\'{o}lica del Norte
         Av. Angamos 0610 Antofagasta, Chile.}
\author[]{Mar\'{\i}a Robbiano \corref{cor1}}
\ead{ mrobbiano@ucn.cl}
\address{Departamento de Matem\'{a}ticas, Universidad Cat\'{o}lica del Norte
         Av. Angamos 0610 Antofagasta, Chile.}
\cortext[cor1]{Corresponding author}
\begin{abstract}

\noindent Let $R$ be a Hermitian matrix. The energy
of $R$, $\mathcal{E}(R)$, corresponds to the sum of the absolute values of its eigenvalues. In this work it is obtained two lower bounds for $\mathcal{E}(R).$ The first one generalizes a lower bound obtained by Mc Clellands for the energy of graphs in $1971$ to the case of Hermitian matrices and graphs with a given nullity. The second one generalizes a lower bound obtained by K. Das, S. A. Mojallal and I. Gutman in 2013 to symmetric non-negative matrices and graphs with a given nullity. The equality cases are discussed. These lower bounds are obtained for  graphs with $m$ edges and some examples are provided showing that, some obtained bounds are incomparable with the known lower bound for the energy $2\sqrt{m}$.
Another family of lower bounds are obtained from an increasing sequence of lower bounds for the spectral radius of a graph. The bounds are stated for singular and non-singular graphs.

\end{abstract}

\begin{keyword}
Energy; energy of a Hermitian matrix; lower bound; singular graph; non-singular graph
\MSC 15A18, 15A29, 15B99.
\end{keyword}

\end{frontmatter}

\section{Notation and Preliminaries}
\noindent
In this work we deal with an $(n,m)$-graph $G$ which is an undirected simple graph with vertex set $V\left( G\right) $ of cardinality $n$ and edge set $E\left( G\right)$ of cardinality $m$.
As usual we denote the adjacency matrix of $G$ by $A=A(G)$. The eigenvalues of $G$ are the eigenvalues of $A$ (see e.g.\cite{C-D-S1,C-D-S2}). Its eigenvalues will be denoted (and ordered) by  $ \lambda_{1} \geq \cdots \geq \lambda_{n}.$ We denote the spectrum of a graph $G$ by $\sigma(G)= \sigma (A(G)).$
\noindent If $e\in E(G)$ has end vertices $i$ and $j$ then it is denoted by $ij$. If $i\in V(G),$ $N_{G}(i)$ denotes  the set of neighbors of the vertex $i$ in $G.$  For the $i$-th vertex of $G$, the cardinality of $N_{G}(i)$ is the degree of $i$ and it is denoted by either $d(i)$ or $d_{i}$. The number of walks of length $k$ of $G$ starting at $i$ is referred as the $k$-degree of the vertex $i$ and is denoted by $d_{k}(i)$ (see \cite{D-G}). For convenience, we set
\begin{align*}
  d_0(i) &= 1, \  d_1(i) = d(i), \ \text{and} \\
 d_{k+1}(i)&= \sum_{j \in N_{G}(i)}d_k(j).
\end{align*}
If $G$ is a connected graph,  then $A(G)$  is  a non-negative  irreducible  matrix \cite{C-D-S1}. The complement of a graph $G$ is usually denoted by $\overline{G}.$
A graph $G$ with $n$ vertices is called a \textit{regular} graph (or $r$-\textit{regular}) if $d_{i} = r, \ 1\leq i \leq n.$
A star and the complete graph with $n$ vertices is denoted by $S_{n}$ and $K_{n},$ respectively.
We recall now some concepts from Matrix Theory used throughout the text.
%A Hermitian $n\times n$ complex %matrix $R=\left( r_{ij}\right) $ is %a
%matrix such that %$r_{ij}=\overline{r}_{ji},\ $for all %$1\leq i,j\leq n$.
In this paper $R$ stands for a Hermitian complex matrix of order $n$ and $M$ represents any square complex matrix. It is well known that for a Hermitian matrix its singular values and the absolute values of its eigenvalues coincide. The energy of $R$, denoted by $\mathcal{E}\left(R\right),$ is the sum of the absolute values of the eigenvalues of $R$.
Note that,
if $R$ is a non-negative matrix, then $R$ is symmetric and its spectral radius, $\rho= \rho(R)$, and its largest eigenvalue coincide.

\noindent For an arbitrary square matrix $M$ of order $k$ with eigenvalues $\mu_1,\ldots, \mu_k$, its nullity, denoted by $\eta(M)$, corresponds to the multiplicity of its null eigenvalue. Thus, if $M$ is non-singular then $\eta(M)=0$.
Note that, for a graph $G$, the nullity of $A(G)$ is called the nullity of $G$ and it is denoted by $\eta(G).$ Consequently, a graph $G$ is called non-singular if $\eta(G)=0$ otherwise, $G$ is called singular. In the text we denote by $\mathbf{e}$ the all ones vector.

\noindent The $k$-th elementary symmetric sum of the eigenvalues $\mu_1, \mu_2, \ldots, \mu_n$ of a square matrix $M$ of order $n$ is defined as
 \begin{align}
\Upsilon_k\left(M\right) =\sum\limits_{1\leq i_{1}<i_{2}<\cdots <i_{k }\leq n}\mu_{i_{1}}\mu_{i_{2}}\cdots\mu_{i_{k}}.
\end{align}
Note that $\Upsilon_n\left(M\right)=\det(M) $ and $\Upsilon_1\left(M\right)=\rm{tr}(M),$ with $\rm{tr} (.)$ denoting the trace of a square matrix.
For a square matrix $M$ of order $n,$ let $M[i_{1},i_{2},\ldots, i_{k}]$ be the principal submatrix of $M$ whose $j$-th row and column are labeled by $i_{j}$, for $1\leq j \leq n$. Then, $\det \left( M[i_{1},i_{2},\ldots, i_{k}]\right)$ is a principal minor of order $k$ of $M$ and it is denoted by $\Delta_{M} \left(i_{1},i_{2},\ldots, i_{k}\right)$. In \cite{CDM} it is shown that
\begin{equation}
\label{expansions}
\left|\Upsilon_k\left(M\right)\right| =\left|\sum\limits_{1\leq i_{1}<i_{2}<\cdots <i_{k}\leq n} \Delta_{M} \left(i_{1},i_{2},\ldots, i_{k}\right)\right|.
\end{equation}

\noindent The Frobenius matrix norm of a square complex matrix $M$, denoted by $\left|M\right|,$ is defined as the square root of the sum of the squares of its singular values. In consequence, if $R$ is a symmetric matrix of order $n$ with eigenvalues $\alpha_{1},\alpha_{2},\ldots ,\alpha_{n},$ $$\left|R\right|^{2}=\sum\limits_{i=1}^{n}\left|\alpha_i\right|^{2}.$$
The paper is organized as follows. At Section 2 some motivation in connection with Chemistry and known lower bounds for $\mathcal{E}(G)$ and the main results without proof are introduced. At Section 3 three cases where the lower bound $2 \sqrt{m}$ introduced by Caporossi et al. in \cite{C-C-G-H} is improved by the lower bound at Theorem \ref{CR}, are presented.
At Section 4 the main theorems and corollaries presented at Section 2 are proved. Namely, in this section one lower bound for $\mathcal{E}\left(R\right)$ is given and generalizes the lower bound for the energy in \cite{MC} to the case of Hermitian matrices with a given nullity.
In \cite{C-G-T-R} an increasing non-negative sequence that converges to the spectral radius of a non-negative symmetric matrix was constructed and a decreasing sequence of upper bounds for the energy of $R$ was obtained. Therefore, using the same sequence, an increasing sequence of lower bounds for $\mathcal{E}\left(R\right),$ where $R$ has given nullity, is obtained at Section 5. Moreover, some results are applied to the adjacency matrix of a graph to obtain lower bounds for the energy of graphs. Equality cases are studied.

%In this work we deal with an %$(n,m)$-graph $G$ which is an %undirected simple graph with a vertex %set $V\left( G\right) $ of %cardinality $n$ and
%an edge set $E\left( G\right) $ of %cardinality $m$.

\section{Motivation and the main results}
\noindent The concept of energy of graphs appeared in Mathematical Chemistry and we review in this section its importance. In Chemistry the structure of molecules are represented by \emph{molecular graphs} where its vertices stand for atoms and edges for bonds. Molecular graphs can be split into two basic types: one type representing \emph{saturated hydrocarbons} and another type representing \emph{conjugated }$\pi$ \emph{-electron systems}. In the second class, the molecular graph should have perfect matchings (called ``Kekul\'{e} structure"). In the 1930s, Erich H\"{u}ckel put forward a method for finding approximate solutions of the Schr\"{o}dinger equation of a class of organic molecules, the so-called \emph{conjugated hydrocarbons} (conjugated $\pi$-electron systems) which have a system of connected $\pi$-orbitals with \emph{delocalized} $\pi$-electrons (electrons in a molecule that are not associated with a single atom or a covalent bond). Thus, the HMO (H\"{u}ckel molecular orbital model) enables to describe approximately\ the behavior of the so-called $\pi$-electrons in a conjugated molecule, especially in conjugated hydrocarbons.
For more details see \cite{gE} and the references therein.
%%%%%%
Following to HMO theory, the total $\pi$-electron energy, $E_{\pi },$ for conjugated hydrocarbons in their ground electronic states, $E_{\pi }$ is calculated from the eigenvalues of the adjacency matrix of the molecular graph:
\begin{equation*}
\mathcal{E}_{\pi }=n\alpha +\mathcal{E}\beta ,
\end{equation*}%
where $n$ is the number of carbon atoms, $\alpha $ and $\beta $ are the HMO
carbon-atom coulomb and carbon-carbon resonance integrals, respectively.
For
the majority conjugated $\pi $-electron systems%
\begin{equation}
\mathcal{E}=\sum_{i=1}^{n}\left\vert \lambda _{i}\right\vert ,  \label{tradener}
\end{equation}%
where $\lambda _{1},\ldots ,\lambda _{n}$ are the eigenvalues of the
underlying molecular graph. For molecular structure researches, $\mathcal{E}$ is a
very interesting quantity. In fact, it is traditional to consider $\mathcal{E}$ as the
total $\pi $-electron energy expressed in $\beta $-units. The spectral
invariant defined by (\ref{tradener}) is called the \emph{energy} of the
graph $G$, and it will be denoted here by $\mathcal{E}(G)$ (see \cite{IG1}). It is worth to be mentioned that in the contemporary literature this graph invariant is widely studied, namely the search for its upper bounds. On the other hand, lower bounds for energy are much fewer in number, probably because these are much more difficult to deduce. Some of these, recently determined, the reader should be referred, for instance, to \cite{ Agudelo,Bozkurt, Ji, Marin, Tian}.

\noindent For an arbitrary graph $G,$ in \cite{MC} McClellands obtained the following lower bound for $\mathcal{E}(G)$:
\begin{align}
  \label{mcc}
\mathcal{E}(G) \geq\sqrt{2m + n(n -1) \left|\det(A) \right|^{2/n}}. \end{align}
where $\det(A)$ denotes the determinant of the matrix $A=A(G).$
The following simple lower bound for a graph $G$ with $m$ edges was introduced by Caporossi et al. in \cite{C-C-G-H} and the equality case was discussed. In fact,
\begin{align}
\label{superior}
  \mathcal{E}(G) \geq  2 \sqrt{m},
\end{align}
\noindent with equality if and only if $G$ consists of a complete bipartite graph $K_{a,b}$ such that $ab= m$ and arbitrarily many isolated vertices. A lower bound for the energy of symmetric matrices and graphs was introduced in \cite{A-R-S}. Necessary conditions for the equality were studied. Some computational experiments were presented shown that, in some cases, the obtained lower bound is incomparable with the lower bound $2\sqrt{m}$.

\noindent In \cite{D-M-G}, Das et al. obtained the following lower bound for a connected non-singular $(n,m)$-graph:
\begin{align}
\label{DasMojallalGutman}
\mathcal{E}(G )\geq \frac{2m}{n} + (n-1) + \ln | \det A |- \ln \frac{2 m}{ n} ,
\end{align}
where $\det (A) $ denotes the determinant of the adjacency matrix $A= A(G).$
\noindent The equality holds in (\ref{DasMojallalGutman}) if and only if $G$ is  the complete graph $K_n$.
The last lower bound was obtained firstly considering that, for a connected graph, the following relationship holds:
\begin{align}\label{2}
   \mathcal{E}(G) \geq \lambda_1 +(n-1) +\ln|\det A|-\ln{\lambda_1}.
\end{align}
In \cite{D-M-G} it was shown that the graph that attains equality in (\ref{2}) is the same graph that attains equality  in (\ref{DasMojallalGutman}).

\noindent  The present work generalizes the lower bound in (\ref{mcc}) for Hermitian matrices $R$, such that $\eta(R)=\kappa$ and the lower bound in (\ref{2}) for non-negative symmetric matrices $R$, such that $\eta(R)=\kappa$. The equality cases are discussed.

 \noindent We present now the main results of this work to be proven at Section 4. Additionally, using the increasing sequence of lower bounds for $\lambda_1$ given in \cite{C-G-T-R} an increasing sequence of lower bounds for the energy of graphs with nullity $\kappa$, is obtained at Section 5.

\begin{theorem}
\label{Robbiano-Infante-Carmona}
Let $R$ be a non-negative symmetric matrix such that $\eta(R)=\kappa$. Then
\begin{align}
\label{mcc2}
\mathcal{E}(R) \geq\sqrt{\left|R\right|^{2} + (n-\kappa)(n-\kappa -1) \left|\Upsilon_{n-\kappa}(R)\right|^{\frac{2}{n-\kappa}}},
\end{align}
The equality holds in (\ref{mcc2}) if and only if the nonzero eigenvalues of $R$ have the same absolute value. Moreover, if $R$ is irreducible the equality holds if and only if $R$ is permutationally equivalent to a block matrix of the form,
\begin{equation}
\label{bm}
  \left(\begin {matrix}
  0 & S\\
  S^{T} & 0
  \end{matrix}\right)
  \end{equation}
where $\kappa=n-2$ and $S$ is a rank one matrix.
\end{theorem}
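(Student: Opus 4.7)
The plan is to mimic McClelland's original argument, but restricted to the nonzero eigenvalues. Let $\mu_1,\ldots,\mu_{n-\kappa}$ denote the nonzero eigenvalues of $R$, so that $\mathcal{E}(R)=\sum_{i=1}^{n-\kappa}|\mu_i|$. Squaring and separating diagonal from off-diagonal terms gives
\begin{equation*}
\mathcal{E}(R)^{2}=\sum_{i=1}^{n-\kappa}|\mu_i|^{2}+\sum_{\substack{i,j=1\\ i\neq j}}^{n-\kappa}|\mu_i|\,|\mu_j|=\left|R\right|^{2}+\sum_{i\neq j}|\mu_i|\,|\mu_j|,
\end{equation*}
where I use that the Frobenius norm squared is the sum of squared absolute values of all eigenvalues, and the zero ones contribute nothing. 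Then I would apply the AM--GM inequality to the $(n-\kappa)(n-\kappa-1)$ positive terms $|\mu_i|\,|\mu_j|$, $i\neq j$. Since each index $i$ appears exactly $2(n-\kappa-1)$ times among those pairs, the geometric-mean side collapses to $\bigl(\prod_{i=1}^{n-\kappa}|\mu_i|\bigr)^{2/(n-\kappa)}$.

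The key algebraic identification is that $\prod_{i=1}^{n-\kappa}\mu_i=\Upsilon_{n-\kappa}(R)$: in the elementary symmetric sum of order $n-\kappa$, every subset of eigenvalues of size $n-\kappa$ that omits one of the nonzero $\mu_i$ must contain a zero eigenvalue, and so contributes nothing. Hence only the subset consisting of all nonzero eigenvalues contributes, yielding exactly $|\Upsilon_{n-\kappa}(R)|^{2/(n-\kappa)}$ after taking absolute value and the appropriate root. Plugging this back into the displayed identity gives the claimed inequality. Equality in AM--GM forces $|\mu_i|\,|\mu_j|$ to be the same for all $i\neq j$; since these are all positive, this is equivalent to $|\mu_1|=\cdots=|\mu_{n-\kappa}|$, which is the first stated equality condition.

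For the refined characterization under irreducibility, suppose $R$ is irreducible, non-negative, symmetric, and equality holds, so all nonzero eigenvalues have common absolute value $\rho=\rho(R)$. By the Perron--Frobenius theorem, $\rho$ is a simple eigenvalue of $R$; hence the remaining nonzero eigenvalues are all equal to $-\rho$. The presence of $-\rho$ in the spectrum of a non-negative irreducible symmetric matrix forces $R$ to be "bipartite" in the Perron--Frobenius sense, i.e.\ permutationally equivalent to a block matrix of the form \eqref{bm}; in this form the spectrum is symmetric about $0$, so the multiplicity of $-\rho$ equals that of $\rho$, namely $1$. Therefore $R$ has exactly two nonzero eigenvalues $\pm\rho$, giving $\kappa=n-2$, and since the nonzero eigenvalues of \eqref{bm} are $\pm$ the singular values of $S$, the matrix $S$ must have exactly one nonzero singular value, i.e.\ be of rank one. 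Conversely, if $R$ is permutationally equivalent to \eqref{bm} with $S$ of rank one, the two nonzero eigenvalues are $\pm\sigma_1(S)$, whose absolute values coincide.

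The only step requiring care, and likely the main obstacle to getting the characterization tight, is the last one: ensuring that Perron--Frobenius genuinely forces simplicity of $\rho$ (hence of $-\rho$ via the bipartite symmetry), and justifying the block decomposition \eqref{bm} when $-\rho(R)$ is itself an eigenvalue. The purely algebraic AM--GM part is routine once the reduction to the $n-\kappa$ nonzero eigenvalues is made.
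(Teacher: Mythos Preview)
Your proposal is correct and follows essentially the same route as the paper. The AM--GM argument on the $(n-\kappa)(n-\kappa-1)$ off-diagonal products, together with the identification $\prod_i\mu_i=\Upsilon_{n-\kappa}(R)$, is exactly what the paper does; your explicit justification that every size-$(n-\kappa)$ subset of eigenvalues other than the nonzero ones must contain a zero is a nice touch that the paper leaves implicit.

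For the irreducible equality case the two arguments are minor rephrasings of one another. The paper invokes the imprimitivity index $h$ of Perron--Frobenius theory directly: since all $n-\kappa$ nonzero eigenvalues lie on the spectral circle and each peripheral eigenvalue is simple, $h=n-\kappa$; symmetry of $R$ forces the peripheral eigenvalues to be real, hence $h\in\{1,2\}$, and the paper passes to $h=2$, giving $\kappa=n-2$ and the cogredience to the block form~\eqref{bm}. Your argument reaches the same conclusion via the chain ``$\rho$ simple $\Rightarrow$ the other nonzero eigenvalues equal $-\rho$ $\Rightarrow$ bipartite block structure $\Rightarrow$ spectrum symmetric about $0$ $\Rightarrow$ $-\rho$ simple $\Rightarrow$ $\kappa=n-2$''. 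Both approaches tacitly set aside the degenerate case $n-\kappa=1$ (a rank-one positive matrix), where equality holds trivially; you may wish to mention this explicitly.
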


\noindent Note that if in the above result the symmetric matrix $R$ is replaced by the adjacency matrix of a graph $G$ the following result is obtained.
\begin{theorem}
\label{CR}
Let $G$ be an $(n,m)$-graph without isolated vertices, with $\eta(G)=\kappa$. Then
\begin{align}
\label{mcc3}
\mathcal{E}(G) \geq\sqrt{2m + (n-\kappa)(n-\kappa -1) \left|\Upsilon_{n-\kappa}(G)\right|^{\frac{2}{n-\kappa}}},
\end{align}
where
\begin{equation*}
\Upsilon_{n-\kappa}(G) =\Upsilon_{n-\kappa}\left(A(G)\right).
\end{equation*}

\noindent The equality holds in (\ref{mcc3}) if and only if the nonzero eigenvalues of $G$ have the same absolute value. Moreover, if $G$ is connected the equality holds if and only if $G=K_{a,b}$ the complete bipartite graph, with $a+b=\kappa+2$. Otherwise $G=\cup_{j=1}^{\ell} K_{a_j,b_j},$ with $a_jb_j=a_ib_i, \ \text{for} \ i\neq j,$ $\ell=\frac{n-\kappa}{2}$ and $n=\sum_{j=1}^{\ell}(a_j+b_j).$
\end{theorem}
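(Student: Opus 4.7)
The plan is to deduce Theorem~\ref{CR} as a direct specialization of Theorem~\ref{Robbiano-Infante-Carmona} to the adjacency matrix $R = A(G)$. The matrix $A(G)$ is symmetric and non-negative, its nullity equals $\kappa$ by hypothesis, and its squared Frobenius norm is $|A(G)|^2 = \sum_{i,j} a_{ij}^2 = 2m$, since exactly $2m$ of its entries equal $1$. Substituting these identities into~(\ref{mcc2}) yields~(\ref{mcc3}) at once, and the first clause of the equality discussion — that all nonzero eigenvalues of $G$ must share the same absolute value — is inherited verbatim from Theorem~\ref{Robbiano-Infante-Carmona}.

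For the connected case, I would invoke the ``moreover'' part of Theorem~\ref{Robbiano-Infante-Carmona}. Since $G$ is connected exactly when $A(G)$ is irreducible, equality forces $A(G)$ to be permutation-equivalent to a block of the form~(\ref{bm}) with $\kappa = n-2$ and $S$ a rank-one $0/1$ matrix. The next small step is the observation that a rank-one $0/1$ matrix $S = \mathbf{u}\mathbf{v}^T$ with no zero row or column (which irreducibility forces) must, after a suitable rescaling of $\mathbf{u}$ and $\mathbf{v}$, be $J_{a,b}$, the all-ones $a\times b$ block. Reading off the corresponding graph from this adjacency matrix, $G$ is the complete bipartite graph $K_{a,b}$ with $a+b = n = \kappa+2$, as claimed.

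For disconnected $G$ without isolated vertices, I would decompose $G = \bigcup_{j=1}^{\ell} G_j$ into its connected components, so that $A(G)$ is block-diagonal with irreducible blocks $A(G_j)$. The hypothesis that the nonzero eigenvalues of $A(G)$ all share a common absolute value $c$ descends to each block, so the equality criterion of Theorem~\ref{Robbiano-Infante-Carmona} is met on every $A(G_j)$; applying its ``moreover'' clause component-wise gives $G_j = K_{a_j,b_j}$ with $a_j b_j = c^2$, which immediately yields $a_j b_j = a_i b_i$ for all $i \neq j$. Counting nullities, each $K_{a_j,b_j}$ contributes $a_j + b_j - 2$ zero eigenvalues to $\kappa$, so $\kappa = n - 2\ell$, whence $\ell = (n-\kappa)/2$ and $n = \sum_{j=1}^{\ell}(a_j + b_j)$.

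The bound itself and the first equivalence cost essentially nothing beyond Theorem~\ref{Robbiano-Infante-Carmona}; the main delicate point is the disconnected case, where the hypothesis of no isolated vertices is precisely what rules out rank-zero components and allows the per-block application of the irreducible ``moreover'' clause to proceed cleanly.
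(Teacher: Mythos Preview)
Your proposal is correct and follows essentially the same route as the paper: specialize Theorem~\ref{Robbiano-Infante-Carmona} to $R=A(G)$ (using $|A(G)|^2=2m$), then handle equality by invoking irreducibility for connected $G$ and applying the argument componentwise otherwise. You supply more detail than the paper does---e.g., the justification that a rank-one $0/1$ block without zero rows or columns is $J_{a,b}$, and the nullity count giving $\ell=(n-\kappa)/2$---but the underlying strategy is identical.
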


%\noindent The above result aims to obtain a generalization of the inequality in (\ref{mcc2}) which in some cases improves this lower bound. In the section 3 examples where the lower bound in Theorem \ref{Carmona-Robbiano} improves the known lower bound $2\sqrt{m}$ are exhibited.

%In addition, this paper generalizes to symmetric nonnegative matrices $R$ such that $\eta(R)=\kappa$ the lower bounds in (\ref{2}) and in (\ref{DasMojallalGutman}). Let $\mathbf{e}$ be the $n$-dimensional all ones column vector. We have proved the next result.

\begin{theorem}
\label{RCI2}
Let $R$ be a non-negative symmetric matrix of order $n$ with spectral radius $\rho$ such that $\eta(R)=\kappa.$ Then
\begin{align}
\label{DasMGutmanCarmonaRobbiano4}
\mathcal{E}(R)\geq \rho+n-\kappa-1+\ln{\left|\Upsilon_{n-\kappa}(R)\right|}-\ln{\rho}.
\end{align}
The equality holds in (\ref{DasMGutmanCarmonaRobbiano4}) if and only if the nonzero eigenvalues of $R$ have all modulus equal to $1$, except maybe for its largest eigenvalue. Moreover, if $R$ has largest eigenvalue greater than $1$ and $\rm{tr}(R)=0$ then $\kappa$,  the number $c$ of eigenvalues equal to $-1$ and, the number $f$ of eigenvalues equal to $1$ satisfy:

\begin{enumerate}
    \item \label{ce} $c=\frac{\left\vert  R\right \vert^{2}- \rho^{2}+ \rho}{2};$ \\

    \item \label{efe} $f=\frac{\left\vert  R\right \vert^{2}-\rho^{2}-\rho}{2};$\\

    \item \label{kappa} $\kappa=n-1+\rho^{2}-\left\vert  R\right \vert^{2}.$
\end{enumerate}
Moreover, the inequality (\ref{DasMGutmanCarmonaRobbiano4}) is strict if $R$ has a submatrix of order $3$, say $R_1$, where either
\begin{enumerate}

\item \label{subm1} $R_1=\left( \begin{matrix} 0 & a & 0\\a & 0 & b\\
    0 & b & 0\end{matrix}\right ) $ with $ \sqrt{a^{2}+b^{2}}>1,$ or

\item \label{subm2} $R_1=\left( \begin{matrix} 0 & a & c\\a & 0 & b\\
    c & b & 0 \end{matrix} \right )$ with a vector $(\alpha, \beta, \gamma)^{T}$ such that $$\frac{2(a\alpha\beta + b\beta\gamma+ c\alpha \gamma)}{\sqrt{\alpha^{2}+ \beta^{2}+ \gamma^{2}}}< -1.$$
\end{enumerate}
%\begin{align}
 %   \label{tesis0}
%    \textcolor{red}{\frac{\left\vert  R\right %\vert^{2}+n-\kappa}{4}}\geq c\geq %\frac{n-\kappa}{2}
%\end{align}

%\begin{align}
%\label{tesis}
 %   \textcolor{red}{c-1\textcolor{blue}{-}\sqrt{\le%ft\vert  R\right \vert^{2} + n -\kappa -4c}=} %f\leq \frac{n-\kappa}{2} -1,
%\end{align}
%\textcolor{blue}{In addition, $$\mathcal{E}(R)=2c,\ %\text{and}\  \rho=1+\sqrt{\left\vert  R\right %\vert^{2} + n -\kappa -4c.}$$ and if %$$c=\sqrt{\left\vert  R\right \vert^{2} + n %-\kappa}-1 \ \text{then} \ f=0.$$}

\end{theorem}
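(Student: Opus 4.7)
The plan is to apply the elementary inequality $x \geq 1 + \ln x$, valid for $x > 0$ with equality iff $x = 1$, to the absolute values of the nonzero eigenvalues of $R$ other than $\rho$. Write $\rho = \alpha_1 \geq \alpha_2 \geq \cdots \geq \alpha_n$ for the eigenvalues of $R$; since $\eta(R) = \kappa$, exactly $\kappa$ of them vanish. Every term in the expansion of $\Upsilon_{n-\kappa}(R)$ that involves a zero eigenvalue vanishes, so $|\Upsilon_{n-\kappa}(R)| = \prod_{\alpha_i \neq 0} |\alpha_i|$. Summing the scalar inequality over the $n-\kappa-1$ nonzero eigenvalues $\alpha_i$ with $i \neq 1$ then gives
\begin{align*}
\sum_{\substack{\alpha_i \neq 0 \\ i \neq 1}} |\alpha_i| \;\geq\; (n-\kappa-1) + \sum_{\substack{\alpha_i \neq 0 \\ i \neq 1}} \ln |\alpha_i| \;=\; (n-\kappa-1) + \ln \frac{|\Upsilon_{n-\kappa}(R)|}{\rho},
\end{align*}
and adding $\rho = |\alpha_1|$ to both sides yields (\ref{DasMGutmanCarmonaRobbiano4}); equality in each use of the scalar inequality forces $|\alpha_i| = 1$ for every nonzero $\alpha_i \neq \rho$, which is the first equality condition.

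Under the additional hypotheses $\rho > 1$ and $\mathrm{tr}(R) = 0$ at equality, the spectrum of $R$ consists of $\rho$ once, $+1$ with multiplicity $f$, $-1$ with multiplicity $c$, and $0$ with multiplicity $\kappa$. The trace condition, the Frobenius-norm identity, and the dimension count then produce the linear system
\begin{align*}
\rho + f - c &= 0, \\
\rho^{2} + f + c &= |R|^{2}, \\
1 + f + c + \kappa &= n,
\end{align*}
whose elementary elimination recovers items (\ref{ce}), (\ref{efe}), and (\ref{kappa}).

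For the strict-inequality assertions I would invoke Cauchy's interlacing theorem for the $3 \times 3$ principal submatrix $R_1$. In case (\ref{subm1}) the characteristic polynomial of $R_1$ is $-\lambda(\lambda^{2}-(a^{2}+b^{2}))$, so its spectrum is $\{-\sqrt{a^{2}+b^{2}},\, 0,\, \sqrt{a^{2}+b^{2}}\}$; interlacing forces $\alpha_n \leq -\sqrt{a^{2}+b^{2}} < -1$, exhibiting a nonzero eigenvalue of $R$ distinct from $\rho$ whose modulus exceeds $1$. In case (\ref{subm2}) the displayed inequality says that a suitably normalised test vector produces a Rayleigh quotient on $R_1$ strictly less than $-1$, so the least eigenvalue of $R_1$ is $< -1$, and interlacing again gives $\alpha_n < -1$. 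In both cases the first equality condition is violated, so (\ref{DasMGutmanCarmonaRobbiano4}) is strict. I anticipate the main subtlety not in the core inequality but in this last equality-to-strictness step, where one must verify that the offending eigenvalue $\alpha_n$ is genuinely nonzero and distinct from $\rho$; this is automatic here because $\alpha_n < -1 < 0 < \rho$.
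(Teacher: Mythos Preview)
Your proof is correct and follows essentially the same route as the paper: the scalar inequality $x\ge 1+\ln x$ applied to the nonzero eigenvalues other than $\rho$, the same three-equation linear system for $c$, $f$, $\kappa$, and the same interlacing/Rayleigh-quotient argument for the strict-inequality cases. Your explicit remark that $\alpha_n<-1<0<\rho$ guarantees the offending eigenvalue is both nonzero and distinct from $\rho$ is a small clarification the paper leaves implicit.
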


\noindent The result in (\ref{2}), can be generalized for all graphs, including singular graphs. The result is stated in Theorem \ref{rCI}.

\begin{theorem}
\label{rCI}
Let $G$ be a graph with $n$ vertices with  largest eigenvalue $\lambda_{1}$ and $\eta(G)=\kappa$. Then
\begin{align}\label{C-R0}
   \mathcal{E}(G) \geq \lambda _{1}+ n-\kappa-1   +\ln{\left|\Upsilon_{n-\kappa}(G)\right|}-\ln{\lambda_1}
\end{align}
The equality holds in (\ref{C-R0}) if and only if the nonzero eigenvalues of $G$, except maybe for its largest eigenvalue, have all modulus equal to $1$. If the largest eigenvalue of $G$ is $1$ then $G=\lfloor \frac{n-\kappa}{2}\rfloor K_2 \cup \kappa K_1.$ On the contrary, if $\rho >1 $ then $G=K_{n-\ell}\cup \kappa K_{1}\cup \lfloor\frac{\ell-\kappa}{2}\rfloor K_2$ with
$\kappa\leq \ell \leq n-3.$
%$n-3\geq \ell \geq \kappa.$
\end{theorem}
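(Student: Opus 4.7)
The plan is to deduce Theorem~\ref{rCI} by specializing Theorem~\ref{RCI2} to $R=A(G)$. Since $A(G)$ is a non-negative symmetric matrix with $\eta(A(G))=\kappa$, $\mathrm{tr}\,A(G)=0$ and spectral radius $\lambda_{1}$, and since $\Upsilon_{n-\kappa}(A(G))=\Upsilon_{n-\kappa}(G)$, the inequality (\ref{C-R0}) is exactly (\ref{DasMGutmanCarmonaRobbiano4}) rewritten for $G$. The bulk of the work is the equality analysis.

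By Theorem~\ref{RCI2}, equality forces every nonzero eigenvalue of $A(G)$ other than $\lambda_{1}$ to have modulus $1$; moreover, summing items \ref{ce}, \ref{efe} and \ref{kappa} yields $c+f+\kappa=n-1$, so whenever $\lambda_{1}>1$ the eigenvalue $\lambda_{1}$ is simple. I would split the analysis into the cases $\lambda_{1}=1$ and $\lambda_{1}>1$.

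\emph{Case $\lambda_{1}=1$.} Every eigenvalue of $A(G)$ lies in $\{-1,0,1\}$, so $A(G)^{3}=A(G)$. For any edge $ij$ and any $w\in N(j)$, the sequence $i\to j\to w\to j$ is a walk of length $3$ from $i$ to $j$, giving $(A(G)^{3})_{ij}\geq d(j)$; combined with $(A(G)^{3})_{ij}=A(G)_{ij}=1$ this forces $d(j)=1$, and symmetrically $d(i)=1$. Hence every edge is an isolated $K_{2}$ and $G=\lfloor (n-\kappa)/2\rfloor K_{2}\cup \kappa K_{1}$.

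\emph{Case $\lambda_{1}>1$.} Since $\lambda_{1}$ is simple, exactly one component $H$ of $G$ has spectral radius $\lambda_{1}$; every other component has spectral radius at most $1$ with eigenvalues in $\{-1,0,1\}$ and so, by Case~1, is a $K_{1}$ or a $K_{2}$. The harder step, which I expect to be the main obstacle, is to identify $H$. Its eigenvalues lie in $\{-1,0,1,\lambda_{1}\}$, and $\mathrm{tr}\,A(H)=0$ with $\lambda_{1}>1$ forces $-1$ to occur in $\mathrm{spec}(H)$, so $\lambda_{\min}(H)=-1$ and $A(H)+I\succeq 0$. The crucial observation is that $A(H)+I$ is then a Gram matrix of unit vectors whose pairwise inner products lie in $\{0,1\}$; by the Cauchy--Schwarz equality case, any two such vectors are either equal or orthogonal, so $H$ decomposes as a disjoint union of cliques, and connectedness yields $H=K_{n_{H}}$ with $n_{H}=\lambda_{1}+1\geq 3$. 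Setting $\ell=n-n_{H}$, the remaining $\ell$ vertices split as $\kappa$ isolated vertices and $(\ell-\kappa)/2$ copies of $K_{2}$, and the constraints $n_{H}\geq 3$ and $(\ell-\kappa)/2\geq 0$ translate to $\kappa\leq \ell\leq n-3$, as claimed.
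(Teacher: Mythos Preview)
Your proposal is correct. The derivation of the inequality~(\ref{C-R0}) by specializing Theorem~\ref{RCI2} to $R=A(G)$ is exactly what the paper does. The equality analysis, however, follows a genuinely different path from the paper's.

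For the case $\lambda_{1}=1$, the paper invokes the equality characterization in Theorem~\ref{CR}: each connected component is a complete bipartite graph $K_{a,b}$ with $\sqrt{ab}=1$, hence $K_{2}$. Your argument via $A^{3}=A$ and the walk-counting bound $(A^{3})_{ij}\ge d(j)$ reaches the same conclusion directly, without appealing to the bipartite classification; it is shorter and entirely self-contained.

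For the case $\lambda_{1}>1$, the paper uses item~\ref{subm1} of Theorem~\ref{RCI2}: if the large component $H$ were not complete it would contain an induced $P_{3}$, giving a principal submatrix $R_{1}$ with $a=b=1$ and $\sqrt{a^{2}+b^{2}}=\sqrt{2}>1$, and by interlacing $\lambda_{\min}(G)\le -\sqrt{2}<-1$, contradicting the equality condition. Your route exploits the same spectral fact, $\lambda_{\min}(H)\ge -1$, but packages it through the positive semidefiniteness of $A(H)+I$: the Gram-matrix interpretation with unit vectors and inner products in $\{0,1\}$ turns adjacency into an equivalence relation and forces $H=K_{n_{H}}$. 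Both arguments hinge on the same constraint $\lambda_{\min}\ge -1$; the paper's is closer to the machinery already set up in Theorem~\ref{RCI2}, while yours is a clean standalone argument that would work even without the submatrix criteria stated there.
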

\noindent As a consequence of Theorem \ref{RCI2} the  following result can be obtained.

\begin{corollary}
\label{RCI}
Let $R$ be a non-negative symmetric matrix of order $n$ with largest eigenvalue $\rho$ such that $\eta(R)=\kappa$ and there exists a non-negative vector $\mathbf{x}$ such that $$\mu=\frac{\mathbf{x}^{T} R \mathbf{x}}{\mathbf{x}^{T}\mathbf{x}} \geq 1.$$ Then
\begin{align}
\label{DasMGutmanCarmonaRobbiano5}
\mathcal{E}(R)\geq \mu+n-\kappa-1+\ln{\left|\Upsilon_{n-\kappa}(R)\right|}-\ln{\mu}.
\end{align}
The equality holds in (\ref{DasMGutmanCarmonaRobbiano5}) if and only if $\mathbf{x}$ is an eigenvector of $R$ associated to $\rho$ and all the nonzero eigenvalues of $R$ have absolute values equal to $1$, except maybe for its largest eigenvalue.
\end{corollary}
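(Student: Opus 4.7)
The plan is to derive Corollary \ref{RCI} from Theorem \ref{RCI2} by exploiting the variational characterization of the spectral radius together with the monotonicity of the function $g(t)=t-\ln t$ on $[1,\infty)$.

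First I would set up the comparison $1\le \mu \le \rho$. Since $R$ is symmetric, the Rayleigh-Ritz principle gives $\rho=\max_{\mathbf{y}\neq 0}\mathbf{y}^{T}R\mathbf{y}/\mathbf{y}^{T}\mathbf{y}$, so the hypothesis yields $\rho\ge\mu\ge 1$. Next I would invoke the auxiliary function $g(t)=t-\ln t$, observing that $g'(t)=1-1/t\ge 0$ for $t\ge 1$, with strict inequality for $t>1$. Hence $g$ is non-decreasing on $[1,\infty)$ (and strictly increasing on $(1,\infty)$), so from $1\le\mu\le\rho$ we obtain
\begin{equation*}
\mu-\ln\mu \;\le\; \rho-\ln\rho .
\end{equation*}
Theorem \ref{RCI2} gives
\begin{equation*}
\mathcal{E}(R)\;\ge\;\rho+n-\kappa-1+\ln\!\left|\Upsilon_{n-\kappa}(R)\right|-\ln\rho,
\end{equation*}
and chaining the two inequalities delivers (\ref{DasMGutmanCarmonaRobbiano5}).

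For the equality characterization I would argue that equality in (\ref{DasMGutmanCarmonaRobbiano5}) forces \emph{both} equalities above. The equality in Theorem \ref{RCI2} produces the condition that every nonzero eigenvalue of $R$ has modulus $1$, with the possible exception of the largest. The equality $\mu-\ln\mu=\rho-\ln\rho$ with $1\le\mu\le\rho$ forces $\mu=\rho$: if $\mu>1$ this follows from the strict monotonicity of $g$ on $(1,\infty)$, while if $\mu=1$ it follows by inspecting that $g(t)=1$ on $[1,\infty)$ only at $t=1$. Now $\mathbf{x}$ is a non-negative vector whose Rayleigh quotient attains the maximum $\rho$; by the standard variational principle this means $\mathbf{x}$ is an eigenvector of $R$ associated to $\rho$. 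Conversely, if $\mathbf{x}$ is an eigenvector of $R$ for $\rho$ and the spectral condition of Theorem \ref{RCI2} holds, then both inequalities become equalities and (\ref{DasMGutmanCarmonaRobbiano5}) is tight.

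The substantive work has already been done inside Theorem \ref{RCI2}; here the only genuinely new ingredient is the elementary monotonicity of $g$ on $[1,\infty)$ together with the Rayleigh-Ritz bound. The step that most needs care is the equality analysis when $\mu=1$: one must check that the boundary case does not slip through the strict monotonicity argument, and one must justify that a non-negative Rayleigh-optimal vector is indeed a $\rho$-eigenvector (which is immediate from symmetry, but worth stating).
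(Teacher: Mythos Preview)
Your proposal is correct and follows essentially the same approach as the paper: both combine the Rayleigh--Ritz inequality $\mu\le\rho$ with the monotonicity of $t-\ln t$ on $[1,\infty)$ and then chain with Theorem~\ref{RCI2}. The paper packages the monotone function as a composition $h=g\circ f$ with $f(x)=x-1-\ln x$ and $g$ affine, whereas you work directly with $t-\ln t$; your handling of the equality case at the boundary $\mu=1$ is in fact slightly more explicit than the paper's, but the argument is the same in substance.
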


\begin{remark}
\label{regular}
If $R$ (with $R$ reducible) is partitioned into
irreducible blocks with one principal {main} block, say $W,$ whose spectral radius is  the spectral radius of $R,$ say $\rho$ such that $W\mathbf{y}=\rho\mathbf{y},$ then $R$ has an associated eigenvector $\mathbf{x}=(\mathbf{y},\mathbf{0},\ldots, \mathbf{0})^{T},$ and if all the nonzero eigenvalues of $R$ have absolute values equal to $1$, except maybe for its largest eigenvalue, the equality in (\ref{DasMGutmanCarmonaRobbiano5}) is also obtained.
\end{remark}

%\begin{remark}

%If $R$ has as first direct sumand, say $S$ ($R$ reducible) which has constant row sum equal to $\rho \geq 1$ with the eigenvector  $\mathbf{x}=(\mathbf{e},\mathbf{0},\ldots, \mathbf{0})^{T},$ where $\mathbf{e}$ denotes the all ones vector and all the others irreducible direct sumands of $R$ has the configuration as in (\ref{bm}) as nonzero eigenvalues of $R$ have absolute values equal to $1$ the inequality in (\ref{DasMGutmanCarmonaRobbiano5}) becomes equality also.
%\end{remark}
\noindent Therefore, for graphs, the result can be rewritten as follows:
 \begin{corollary}
\label{Carmona}
Let $G$ be an $(n,m)$-graph with largest eigenvalue $\lambda_{1}$ and let an induced $(n_1,m_1)$-subgraph, say $G_{1},$ where $G_1$ is any connected component with $n_1 \geq 2$. Therefore
\begin{align}
\label{DasMGutmanCarmonaRobbiano3}
\mathcal{E}(G) \geq \frac{2m_1}{n_1}+n-\kappa-1+\ln{\left|\Upsilon_{n-\kappa}(G)\right|}-\ln{\frac{2m_1}{n_1}}.
\end{align}
In particular, if $G_1$ is $r_1$-regular then
\begin{align}
%\label{DasMGutmanCarmonaRobbiano9}
\mathcal{E}(G) \geq r_1+n-\kappa-1+\ln{\left|\Upsilon_{n-\kappa}(G)\right|}-\ln{r_1}.
\end{align}
If $\rho=1$ then equality holds if and only if  $G=\lfloor \frac{n-\kappa}{2}\rfloor K_2 \cup \kappa K_1.$ On the contrary, if $\rho >1 $ then equality holds if and only if $G=K_{n-\ell}\cup \kappa K_{1}\cup \lfloor\frac{\ell-\kappa}{2}\rfloor K_2$ with
$\kappa\leq \ell \leq n-3,$ taken $G_1=K_{n-\ell}.$

\end{corollary}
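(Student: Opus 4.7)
The plan is to deduce Corollary \ref{Carmona} as a direct application of Corollary \ref{RCI} (or, when $A(G)$ is reducible, the situation described in Remark \ref{regular}) with $R = A(G)$, taking as non-negative test vector $\mathbf{x}$ the characteristic vector of $V(G_1)$, namely $x_i = 1$ for $i \in V(G_1)$ and $x_i = 0$ otherwise. A direct count gives $\mathbf{x}^{T} A(G)\mathbf{x} = 2m_1$ and $\mathbf{x}^{T}\mathbf{x} = n_1$, so the Rayleigh quotient is $\mu = 2m_1/n_1$. To legitimate the application of Corollary \ref{RCI} I would verify $\mu \geq 1$: since $G_1$ is connected with $n_1 \geq 2$ vertices it contains a spanning tree, so $m_1 \geq n_1 - 1$ and hence $\mu \geq 2 - 2/n_1 \geq 1$. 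Feeding $\mu = 2m_1/n_1$ into (\ref{DasMGutmanCarmonaRobbiano5}) yields (\ref{DasMGutmanCarmonaRobbiano3}).

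The regular specialization is immediate: if $G_1$ is $r_1$-regular, then $2m_1 = n_1 r_1$, so $\mu = r_1$, and the characteristic vector is an eigenvector of $A(G_1)$ for the eigenvalue $r_1$; since $G_1$ is a connected component there are no edges between $V(G_1)$ and $V(G)\setminus V(G_1)$, so $\mathbf{x}$ extends (by zero) to an eigenvector of $A(G)$, and Remark \ref{regular} applies.

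For the equality characterization, equality in Corollary \ref{RCI} requires $\mathbf{x}$ to be an eigenvector of $A(G)$ associated with $\rho$ and that all non-zero eigenvalues of $A(G)$ have modulus $1$ except possibly $\rho$. Since $\mathbf{x}$ is constant on $V(G_1)$ and zero elsewhere, the eigenvector condition is equivalent to $G_1$ being regular with $r_1 = \rho$. Invoking Theorem \ref{rCI} to describe all graphs whose non-zero eigenvalues have modulus $1$ except possibly $\rho$, I would split on the value of $\rho$: if $\rho = 1$ then $r_1 = 1$ forces $G_1 = K_2$ and Theorem \ref{rCI} gives $G = \lfloor (n-\kappa)/2 \rfloor K_2 \cup \kappa K_1$; if $\rho > 1$ then Theorem \ref{rCI} gives $G = K_{n-\ell} \cup \kappa K_1 \cup \lfloor(\ell-\kappa)/2\rfloor K_2$ with $\kappa \leq \ell \leq n-3$, and the component $G_1$ must be the unique regular component with degree $\rho > 1$, namely $K_{n-\ell}$ (any $K_2$ component would give $r_1 = 1 < \rho$, ruling it out as the choice of $G_1$).

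The only real technical point is translating the eigenvector requirement inside Corollary \ref{RCI} into the statement that $G_1$ is the regular component realizing the spectral radius. Everything else is a Rayleigh quotient computation on a component indicator plus quoting the equality part of Theorem \ref{rCI}.
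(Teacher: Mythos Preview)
Your proposal is correct and follows essentially the same route as the paper: apply Corollary~\ref{RCI} with $R=A(G)$ and the indicator vector of $V(G_1)$ as test vector, giving $\mu=2m_1/n_1$, then read off the regular specialization and use the equality clause of Corollary~\ref{RCI} together with Theorem~\ref{rCI} to characterize equality. In fact you supply the verification $\mu\geq 1$ (via $m_1\ge n_1-1$) which the paper simply asserts, so your version is slightly more complete.
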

\section{Three cases where the lower bound $2\sqrt{m}$ is improved by the lower bound at Theorem \ref{CR}}
\noindent In this section we present some cases where the lower bound for $\mathcal{E}(G)$, given in  (\ref{mcc}), $2\sqrt{m},$ is improved by the lower bound in (\ref{mcc3}) presented at Theorem \ref{CR}.
\begin{enumerate}
\item In \cite[Proof of Theorem 2]{Hofmeister}, for $T\neq S_{n}$ a tree with $n\geq 4$ vertices, its characteristic polynomial was presented:
\begin{equation}
    p_{T}(x)=x^{n-4}(x^{4}-(n-1)x^{2}+(n-3)).
\end{equation}
It is clear that $\kappa=n-4$, $\Upsilon_{4}(T)=n-3,$ and $m=n-1.$ Then, imposing the inequality
\begin{align}
\label{imhne}
    &2\sqrt{m}\leq \sqrt{2m+(n-\kappa)(n-\kappa-1)\left|\Upsilon_{n-\kappa}(T)\right|^{\frac{2}{n-\kappa}}},
\end{align}
    the inequality $(n-1)^2\leq 36(n-3)$ is obtained.

Therefore, for $4 \leq n \leq34$ the lower bound in (\ref{mcc3}) from Theorem \ref{CR}, improves the known lower bound $  2\sqrt{m}.$

\item Consider the join of two complete bipartite graphs, denoted as $G= K_{r_1,r_1} \vee K_{r_2,r_2}.$ Its spectrum (see e.g. \cite{CFMR2013}) is:
           \begin{align*}
                \sigma (G)
               =\{0^{2r_1-2+2r_2-2},-r_1,-r_2\} \cup \sigma(F)
           \end{align*}
where
\begin{equation*}
 F=
\begin{pmatrix}
 r_1 & 2\sqrt{r_1r_2}\\
 2\sqrt{r_1r_2}   & r_2
\end{pmatrix}   .
\end{equation*}
Then,
\begin{align*}
   \det(F)&=-3r_1r_{2}.
\end{align*}
Moreover,
\begin{align*}
    n&=2r_1+2r_2, & m&=r_1^2+r_2^2+4r_{1}r_{2},\\
    \kappa&= 2r_1+2r_2-4=n-4,  \ \text{and} &
    \Upsilon_{4}(G) &= (\beta_1)( \beta_2)(-r_1)(-r_2)=-3r_1^2r_2^{2},
\end{align*}
where $\beta_1$, $\beta_2$, are the eigenvalues of $F.$

By imposing the inequality in (\ref{imhne}), for $G$,
%\begin{align}
%\label{imhne}
   % 2\sqrt{m}&\leq \sqrt{2m+(n-\kappa)(n-\kappa-1)\abs{\Upsilon}^{\frac{2}{n-\kappa}}}
  %  \end{align}
the following inequality is obtained:
\begin{align}
\label{GI}
    r_1^2+r_2^2 &\leq r_1r_2(6\sqrt{3}-4).
\end{align}
As, $6\sqrt{3}-4 \approx 6.39,$ if $5r_{1}=r_2$ the lower bound in (\ref{mcc3}) improves the lower bound $2\sqrt{m}.$

On the other hand, if one of the parameters is fixed, say $r_1$, from the inequality (\ref{GI}), the lower bound in (\ref{mcc3}) improves the lower bound $2\sqrt{m}$ whenever

\begin{equation*}
    r_1\left(0.16\right)\leq r_2 \leq r_1\left(6.23\right).
\end{equation*}
%Para el grafo bipartito completo $K_{p,q}$ tenemos que
%\begin{align*}
 %   n&=p+q\\
  %  m&=pq\\
   % \kappa&= p+q-2=n-2\\
    %\Upsilon &= -pq
%\end{align*}
%De donde tenemos que
%\begin{align*}
 %   2\sqrt{m}&\leq \sqrt{2m+(n-\kappa)(n-\kappa-1)\abs{\Upsilon}^{\frac{2}{n-\kappa}}}\\
   % 2\sqrt{pq}&\leq \sqrt{2pq+(2)(1)\abs{-pq}^{\frac{2}{2}}}\\
   % 2\sqrt{pq}&\leq \sqrt{4pq}\\
   % 2\sqrt{pq}&\leq 2\sqrt{pq}\\
%\end{align*}
\item Let $G$ be a graph with $n$ vertices
and consider the generalized composition of the family of graphs
$\mathcal{F}= \{ \mathcal{H}_{1}, \ldots, \mathcal{H}_{n}\}$ where
$\mathcal{H}_{1}= \cdots = \mathcal{H}_{n}= \overline{K_t},$ $H= G[\mathcal{H}_{1},\dots, \mathcal{H}_{n}]$. Recall that, each vertex of $V(G)$ is assigned to the graph $\mathcal{H}_{j} \in \mathcal{F}$ (see \cite{CFMR2013, Schwenk74}). Then, from \cite[Theorem 5]{CFMR2013}

\begin{align*}
    \sigma(H)&=\mathop{\cup}\limits_{i=1}^{n} (\sigma(\mathcal{H}_{j}) \setminus \{0\}) \cup \sigma(tA(G))\\
    &=\{ 0^{n(t-1)} \} \cup \{ t\lambda : \lambda \in \sigma(A(G))\}.
\end{align*}

%$\mathcal{F}= \{\overline{K_t},\dots, \overline{K_t}\}$, $H= %G[\overline{K_t},\dots, \overline{K_t}],$ see \cite{CFMR2013, %Schwenk74}. Then, from \cite[Theorem 5]{CFMR2013}

%\begin{align*}
%    \sigma(H)&=\mathop{\cup}\limits_{v \in V(G)} %(\sigma(\overline{K_t}) \setminus \{0\}) \cup \sigma(tA(G))\\
%    &=\{ 0^{n(t-1)} \} \cup \{ t\lambda : \lambda \in %\sigma(A(G))\}.
%\end{align*}
Therefore,
\begin{equation*}
    \mathcal{E}(H) =\sum_{\lambda \in \sigma(G)} |t\lambda|= t\mathcal{E}(G).
\end{equation*}

\noindent Thus, if $0 \in \sigma(G)$ has multiplicity $\kappa$ then $0 \in \sigma(H)$ has multiplicity  $ \kappa+n(t-1)$.
The following equalities are easy to compute:
\begin{align*}
    \overline{n}&=n(H)=nt, &
   \overline{m}&= m(H)=mt^{2},\\
   \overline{\kappa}&= \kappa(H)= n(t-1)+\kappa, &
    \Upsilon_{\overline{n}-\overline{\kappa}}(H) &= t^{n-\kappa} \Upsilon_{n-\kappa}(G).
\end{align*}
Suppose that $G$ is an $(n,m)$-graph with nullity  $\kappa$ such that inequality in (\ref{imhne}) holds.
%\begin{equation*}
   % 2\sqrt{m}\leq\sqrt{2m+(n-\kappa)(n-\kappa-1)\abs{\Upsilon_{n-\kappa}(G)}^{\frac{2}{n-\kappa}}}.
%\end{equation*}
Then, from previous equalities, $H= G[\mathcal{H}_{1},\dots, \mathcal{H}_{n}],$ is an  $(\overline{n},\overline{m})$- graph with nullity $\overline{\kappa}$ such that
   \begin{equation*}
      2\sqrt{\overline{m}}\leq  \sqrt{2\overline{m}+(\overline{n}-\overline{\kappa})(\overline{n}-\overline{\kappa}-1) \Upsilon_{\overline{n}-\overline{\kappa}}(H)^{\frac{2}{\overline{n}-\overline{\kappa}}}}
      ,
   \end{equation*} as
\begin{align*}
  &\sqrt{2\overline{m}+(\overline{n}-\overline{\kappa})(\overline{n}-\overline{\kappa}-1) \Upsilon_{\overline{n}-\overline{\kappa}}(H)^{\frac{2}{\overline{n}-\overline{\kappa}}}}\\
  &=\sqrt{2mt^2+(nt-(n(t-1)+\kappa))(nt-(n(t-1)+\kappa)-1)(t^{n-k}\Upsilon_{n-k}(G))^{\frac{2}{n-\kappa}}}\\
  &=\sqrt{2mt^2+(n-\kappa)(n-\kappa-1)t^{2}\Upsilon_{n-k}(G)^{\frac{2}{n-\kappa}}}\\ &=t\sqrt{2m+(t-\kappa))(n-\kappa)-1)\Upsilon_{n-k}(G)^{\frac{2}{n-\kappa}}}\\
  & \geq 2t\sqrt{m} = 2\sqrt{\overline{m}}.
\end{align*}
\end{enumerate}

\section{Proof of the main results.}

\noindent In this section we prove Theorems \ref{Robbiano-Infante-Carmona}, \ref{CR}, \ref{RCI2} and \ref{rCI} and the Corollaries \ref{RCI} and \ref{Carmona} described at Section 2.

\begin{proof}\textbf{of Theorem \ref{Robbiano-Infante-Carmona}}
Let $\alpha _{j_{1}}\geq \alpha _{j_{2}}\geq \cdots \geq \alpha
_{j_{n-\kappa }}$ be the non-zero eigenvalues of $R.$ It is
clear that%
\[
\mathcal{E}\left( R\right) =\left\vert \alpha _{j_{1}}\right\vert +\left\vert \alpha
_{j_{2}}\right\vert +\cdots +\left\vert \alpha _{j_{n-\kappa }}\right\vert .
\]%
Thus
\begin{eqnarray*}
\mathcal{E}\left( R\right) ^{2} &=&\left( \left\vert \alpha _{j_{1}}\right\vert
+\left\vert \alpha _{j_{2}}\right\vert +\cdots +\left\vert \alpha
_{j_{n-\kappa }}\right\vert \right) ^{2} \\
&=&\sum_{\ell =1}^{n-\kappa }\left\vert \alpha _{j_{\ell }}\right\vert
^{2}+\sum_{\ell _{1}\neq \ell _{2}}\left\vert \alpha _{j_{\ell
_{1}}}\right\vert \left\vert \alpha _{j_{\ell _{2}}}\right\vert .
\end{eqnarray*}%

\noindent Recall that $\Delta_{R}[{i_{1},i_{2},\ldots, i_{n-\kappa }]}$ denotes the $k \times k$ principal minor of $R.$ Since the geometric mean of a set
of positive numbers is not greater than the arithmetic mean, and the equality holds if and only if all of them are equal, we have:

\begin{eqnarray*}
\frac{1}{\left( n-\kappa \right) \left( n-\kappa -1\right) }\sum_{\ell
_{1}\neq \ell _{2}}\left\vert \alpha _{j_{\ell _{1}}}\right\vert \left\vert
\alpha _{j_{\ell _{2}}}\right\vert  &\geq& \left( \prod_{\ell _{1}\neq \ell
_{2}}\left\vert \alpha _{j_{\ell _{1}}}\right\vert \left\vert \alpha
_{j_{\ell _{2}}}\right\vert \right) ^{\frac{1}{\left( n-\kappa \right)
\left( n-\kappa -1\right) }} \\
&=&\left( \prod_{\ell =1}^{n-\kappa}\left\vert \alpha _{j_{\ell }}\right\vert \right) ^{%
\frac{2}{n-\kappa }} \\
&=&\left\vert \prod_{\ell=1 }^{n-\kappa}\alpha _{j_{\ell }}\right\vert ^{\frac{2}{%
n-\kappa }} \\
&=&\left\vert \Upsilon_{n-\kappa}\left(R\right) \right\vert ^{\frac{2}{n-\kappa }}.
\end{eqnarray*}

\noindent By the equality in (\ref{expansions}), the term $\left\vert \Upsilon_{n-\kappa}\left(R\right) \right\vert ^{\frac{2}{n-\kappa }} $ changes from a spectral invariant to a matrix invariant. Finally, the equality holds if and only if
 \begin{align}
 \label{alliquals}
     \left\vert \alpha _{j_{1}}\right\vert =\left\vert \alpha
_{j_{2}}\right\vert=\cdots =\left\vert \alpha _{j_{n-\kappa }}\right\vert.
 \end{align}
 From (\ref{alliquals}), attending to the definition of imprimitivity $h$ in \cite[Section III]{M}, we have $h=n- \kappa.$ Additionally, as $R$ is symmetric, its imprimitivity index must be $h=2.$ Therefore $\kappa=n-2.$ Moreover $R$ is cogredient (that is, permutationally similar), to a matrix of the form in (\ref{bm}) and as $\kappa=n-2,$ the block $S$ is a rank one matrix. By \cite[Theorem 4.2]{M} it is clear that, in this case, $\rho(R)$, (the spectral radius of $R$), and $-\rho(R)$ are the only nonzero eigenvalues of $R$.
 \end{proof}
\begin{remark}
Note that the equality \[
\left\vert \sum\limits_{1\leq i_{1}<i_{2}< \cdots < i_{n-\kappa } \leq n}
\Delta_{R}[i_{1},i_{2},\ldots, i_{n-\kappa}]\right\vert =
\left\vert \prod_{ l=1 }^{n-\kappa} \alpha_{j_{l}}\right\vert\]
is deduced considering the list of all (zero and nonzero) eigenvalues of $R$.
%Note that the zero eigenvalues contribute with summands equal to zero.
\end{remark}
\begin{proof}\textbf{of Theorem \ref{CR}}
\noindent The proof of the inequality is obtained following the same steps of the proof of Theorem \ref{Robbiano-Infante-Carmona} replacing the Hermitian matrix $R$ by the adjacency matrix of $G$. For the equality case, if $G$ is connected then $A(G)$ is irreducible and from the equality case in Theorem \ref{Robbiano-Infante-Carmona}, necessarily $G=K_{a,b}$. If $G$ is not connected then, by Theorem \ref{Robbiano-Infante-Carmona}, each connected component verifies the condition (\ref{alliquals}). Therefore, it is a complete bipartite graph and the described conditions for $G$ in the statement hold.
\end{proof}
\begin{proof}
\textbf{of Theorem \ref{RCI2}}
Let  $\alpha _{j_{1}}\geq \alpha _{j_{2}}\geq \cdots \geq \alpha
_{j_{n-\kappa }}$, with $\alpha _{j_{1}}=\rho$, be the non-zero eigenvalues of $R$. In \cite{{D-M-G}} it was proved that the real function $f(x)=x-1-\ln{x}, \quad x>0$ is a strictly increasing function for $x\geq 1$ and is decreasing in $0<x\leq 1$. Hence, $f(x)\geq f(1)=0,$ implying that $x\geq 1+\ln{x}, \quad x>0. $  Note that the equality holds if and only $x=1$. Using the above result, we get
\begin{eqnarray}
\label{Energia}
\begin{array}{lll}
    \mathcal{E}(R)&=& \displaystyle\rho + \sum _{ j=2 }^{ n-\kappa  }{ { \left|\alpha _{ { i }_{ j } }\right| } }\\
         & \geq & \displaystyle \rho + n-\kappa-1   +\sum _{ j=2 }^{ n-\kappa  }{ \ln{\left|\alpha _{ { i }_{ j } }\right| } }\\
         &=& \displaystyle \rho + n-\kappa-1   +\ln{\prod _{ j=2 }^{ n-\kappa  }{ \left|\alpha _{ { i }_{ j } }\right| }}\\
&=& \displaystyle \rho + n-\kappa-1   +\ln{{\left|\prod _{ j=2 }^{ n-\kappa  }{ \alpha _{ { i }_{ j } } }\right|}}\\
&=& \displaystyle \rho + n-\kappa-1   +\ln{\left|\Upsilon_{n-\kappa}(R)\right|}-\ln{\rho},
\end{array}
\end{eqnarray}
where the equality holds if and only if
\begin{align*}
     1=\left\vert \alpha _{j_{2}}\right\vert =\left\vert \alpha
_{j_{3}}\right\vert=\cdots =\left\vert \alpha _{j_{n-\kappa }}\right\vert.
 \end{align*}
Now, suppose that $R$ has largest eigenvalue greater than $1$ and $\rm{tr}(R)=0.$ Recalling that $\left\vert R\right \vert^{2}$ its the sum ob the squares of the absolute modulus of the eigenvalues of $R$, then the first equalities \ref{ce}., \ref{efe}. and \ref{kappa}. are obtained by searching solutions $\kappa$, $c$ and $f$ as function of $n$, $\rho$ and  $\left \vert R\right \vert$ in the following system:

 \begin{align*}
    1+c+f+\kappa&=n\\
    \rho+f-c&=0\\
    \rho^{2}+f+c&=\left\vert R\right \vert^{2}.
 \end{align*}

\noindent Now we discuss the case when the inequality (\ref{DasMGutmanCarmonaRobbiano4}) is strict. For the sufficient conditions \ref{subm1}. and \ref{subm2}. the interlacing of eigenvalues is used considering the smallest eigenvalues of $R$ and $R_{1},$ respectively (see, for instance \cite[Corollary 2.2]{haemersinterlacing}). As the smallest eigenvalue of $R_1$ in \ref{subm1}. is $-\sqrt{a^{2}+b^{2}}$ and imposing that this eigenvalue is smaller than $-1$ (note that, in this case its modulus is greater than $1$ and therefore $R$ doesn't fulfill the equality condition)
then $\sqrt{a^{2} +b^{2}}>1$.
 For the condition in \ref{subm2}. the Rayleigh quotient is used and the fact that the smallest eigenvalue of a symmetric matrix is at most a Rayleigh quotient of the matrix (\cite{haemersinterlacing, M}). Now, by noticing that either in  \ref{subm1}. or in
 \ref{subm2}. we impose that $R$ has the smallest eigenvalue not equal to $-1$, (using the same argument as before) the result follows.
 \end{proof}
\medskip
\textbf{Proof of Theorem \ref{rCI}}
The proof follows straightforward from the arguments used in the proof of Theorem \ref{RCI2} replacing the non-negative symmetric matrix $R$ by the adjacency matrix of the graph $G$.
For the equality case, and when $\rho=1,$ by Theorem \ref{CR} (attending that all the eigenvalues are of equal modulus) any connected component of $G$ has nonzero eigenvalues $1$ and $-1$ implying that they are isolated edges and therefore $G$ is the union of isolated vertices and isolated edges, that is $G=\lfloor \frac{n-\kappa}{2}\rfloor K_2 \cup \kappa K_1.$
On the other hand, if $\rho>1$ then $G$ must have a connected component with at least three vertices and one see that any induced subgraph with three vertices of this component must be a cycle otherwise it would be a path and by \ref{subm1}. and from Theorem \ref{RCI2}, $A(G)$ would have a submatrix of the form $R_1$ as in \ref{subm1}. (and using interlacing) the smallest eigenvalue of $G$ would not be $-1$. Therefore, if there exists a connected component of $G$ with at least three vertices, it must be a complete graph, and
then $G=K_{n-\ell}\cup \kappa K_{1}\cup \lfloor\frac{\ell-\kappa}{2}\rfloor K_2$ with
$\kappa\leq \ell \leq n-3.$
%$n-3\geq \ell \geq \kappa.$
\begin{proof}
\textbf{of Corollary \ref{RCI}}
Recall that from the Rayleigh quotient $\rho=\alpha_1\geq  \frac{\mathbf{x}^{T} R \mathbf{x}}{\mathbf{x}^{T}\mathbf{x}}$ with equality if and only if $(\rho, \mathbf{x})$ is an eigenpair of $R$ (see e.g. \cite{M}). Recalling that, as in the proof of Theorem \ref{RCI2}, the real function $f(x)=x-1-\ln{x}, \quad x>0$ is strictly increasing for $x\geq 1$ and decreasing in $0<x\leq 1$ (\cite{{D-M-G}})
then $f(x)\geq f(1)=0,$ which implies $x\geq 1+\ln{x}, \quad x>0. $  Moreover, the real function $g(x)=x+ n-\kappa + \ln{\left|\Upsilon_{n-\kappa}(R)\right|}$ is a strictly increasing function, then the function $h= g \circ f $ is strictly increasing for $x\geq 1$. From the condition $\frac{\mathbf{x}^{T}R\mathbf{x}}{\mathbf{x}^{T}\mathbf{x}}\geq 1$ we have $h(\rho) \geq h(\frac{\mathbf{x}^{T}R\mathbf{x}}{\mathbf{x}^{T}\mathbf{x}}).$ Therefore, as $E(R)\geq h(\rho)$ as proved in Theorem \ref{RCI2}, the inequality follows.
If equality holds then for all nonzero eigenvalue of $R$,
$\alpha$, and different from the largest one the equality $\left|\alpha\right|=1+\ln{\left|\alpha\right|}$ occurs only when $\left|\alpha\right|=1$ implying that $\alpha=\pm1$ and $\rho=\frac{\mathbf{x}^{T}R\mathbf{x}}{\mathbf{x}^{T}\mathbf{x}},$ as $h$ is strictly increasing.
\end{proof}

\begin{proof}
\textbf{of Corollary \ref{Carmona}} Let $G_1$ be an induced $(n_{1}, m_{1})$-subgraph of $G$ with $n_1 \geq 1$. The proof follows directly from the proof of Corollary \ref{RCI} changing the non-negative symmetric matrix $R$ by the adjacency matrix of the graph $G$.
\noindent At this point recall that, if $\mathbf{x}$ is as in the statement of Remark \ref{regular} then
$\frac{\mathbf{x}^{T}A(G)\mathbf{x}}{\mathbf{x}^{T}\mathbf{x}}=\frac{2m_1}{n_1}\leq \lambda_{1}= \rho,$ with equality if and only if $G_1$ is a regular graph (see \cite{C-D-S1}, for example). Moreover,
the real function $g(x)=x+ n-\kappa \ln{\left|\Upsilon_{n-\kappa}(G)\right|}$  is strictly increasing, then the function $h= g \circ f $ is strictly increasing for $x\geq 1$. From the condition $\frac{2m_1}{n_1}\geq 1$ we have $h(\lambda_1) \geq h(\frac{2m_1}{n_1}),$
Therefore, as $E(R)\geq h(\rho)$ as proved in Theorem \ref{RCI2}, the inequality in (\ref{DasMGutmanCarmonaRobbiano3}) follows. If equality in (\ref{DasMGutmanCarmonaRobbiano3}) holds then for all nonzero eigenvalue (and different from the largest eigenvalue) $\lambda$ of $G$ the equality $\left|\lambda\right|=1+\ln{\left|\lambda\right|}$  occurs only when $\left|\lambda\right|=1$ implying that $\lambda=\pm1$ and $\lambda_{1}=\frac{2m_1}{n_1}.$ Therefore, $G_1$ is a regular connected component and then the graphs in the statement proceed.
\end{proof}
\vspace{0.6cm}

\section{An increasing sequence of lower bounds for the graph energy}

\noindent In this section we obtain an increasing sequence of lower bounds for the energy of graphs.
In \cite{H-T-W}, the authors built an increasing sequence, $\{\gamma^{(k)}\}_{k\geq 0}$ of lower bounds for $\lambda_{1}.$
Where,
%Let
\begin{equation}\label{seq1}
  \begin{array}{cc}
        \gamma^{(0)}=\sqrt{ \displaystyle\frac{ \displaystyle \sum_{i \in V(G)} d^2(i)}{n}} \\
            \\
        \gamma^{(1)}=\sqrt{ \frac{\displaystyle \sum_{i \in V(G)} d_2^2(i)}{\displaystyle \sum_{i \in V(G)} d^2(i)}} \\
        \vdots \\
        \gamma^{(k)}=\sqrt{ \frac{\displaystyle \sum_{i \in V(G)} d_{k+1}^2(i)}{\displaystyle \sum_{i \in V(G)} d^2_{k}(i)}}.
  \end{array}
\end{equation}
\noindent Then the following results were obtained.
\begin{theorem}{\cite{H-T-W}}
Let $G$ be a connected graph with largest eigenvalue $\lambda_{1}$ and $k \geq 0$. Then
$$ \lambda_1 \geq \gamma^{(k)}, $$
with equality if and only if $A^{k+2}(G)\mathbf{e}=\lambda_1^{2}A^{k}(G)\mathbf{e}.$
\end{theorem}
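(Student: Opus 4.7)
The plan is to recognize $\gamma^{(k)}$ as the square root of a Rayleigh quotient of $A^{2}$ evaluated at the vector $y=A^{k}\mathbf{e}$, and then to invoke the Rayleigh--Ritz characterization together with Perron--Frobenius.

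First I would translate the walk counts into linear algebra. Since $d_{k}(i)$ is by definition the $i$-th entry of $A^{k}\mathbf{e}$, the identities
\[
\sum_{i\in V(G)} d_{k}^{2}(i) \;=\; \lVert A^{k}\mathbf{e}\rVert^{2},
\qquad
\sum_{i\in V(G)} d_{k+1}^{2}(i) \;=\; \lVert A^{k+1}\mathbf{e}\rVert^{2} \;=\; (A^{k}\mathbf{e})^{T} A^{2}(A^{k}\mathbf{e})
\]
are immediate from $A=A^{T}$. Setting $y=A^{k}\mathbf{e}$, the definition of $\gamma^{(k)}$ in (\ref{seq1}) rewrites as
\[
\bigl(\gamma^{(k)}\bigr)^{2} \;=\; \frac{y^{T} A^{2}\, y}{y^{T} y}.
\]
Because $G$ is connected, $A$ is irreducible non-negative, so its Perron eigenvector is strictly positive; hence $\mathbf{e}$ has a nonzero component along it and $y=A^{k}\mathbf{e}\neq \mathbf{0}$, making the quotient well-defined.

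Next I would apply the Rayleigh--Ritz inequality to the symmetric matrix $A^{2}$. Its eigenvalues are the $\lambda_{i}^{2}$, and Perron--Frobenius gives $\lambda_{1}\geq |\lambda_{i}|$ for every $i$, so $\lambda_{1}^{2}$ is the largest eigenvalue of $A^{2}$. Therefore
\[
\bigl(\gamma^{(k)}\bigr)^{2} \;=\; \frac{y^{T}A^{2}y}{y^{T}y} \;\leq\; \lambda_{1}^{2},
\]
and taking square roots yields $\lambda_{1}\geq \gamma^{(k)}$.

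For the equality case, Rayleigh--Ritz tells us that equality occurs exactly when $y$ lies in the $\lambda_{1}^{2}$-eigenspace of $A^{2}$, i.e.\ $A^{2} y = \lambda_{1}^{2} y$; substituting $y=A^{k}\mathbf{e}$ gives precisely $A^{k+2}(G)\mathbf{e}=\lambda_{1}^{2}A^{k}(G)\mathbf{e}$, which is the stated criterion. The part that needs the most care is this equivalence: in the bipartite case $-\lambda_{1}$ is also an eigenvalue of $A$, so the $\lambda_{1}^{2}$-eigenspace of $A^{2}$ properly contains the $\lambda_{1}$-eigenspace of $A$; however, the equality condition is phrased directly in terms of $A^{2}$ acting on $A^{k}\mathbf{e}$, so no further disentangling is required. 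Beyond this bookkeeping I foresee no serious obstacle.
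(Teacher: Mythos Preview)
Your argument is correct. The key observation---that $(\gamma^{(k)})^{2}$ is exactly the Rayleigh quotient of $A^{2}$ at the vector $y=A^{k}\mathbf{e}$---reduces the inequality to Rayleigh--Ritz, and your justification that $y\neq\mathbf{0}$ (via the positive Perron component of $\mathbf{e}$) and that $\lambda_{1}^{2}$ is the top eigenvalue of $A^{2}$ (via Perron--Frobenius) are both sound. The equality analysis is also right: the condition $A^{k+2}\mathbf{e}=\lambda_{1}^{2}A^{k}\mathbf{e}$ is literally the eigenvector condition for $A^{2}$, so the bipartite subtlety you flag does not interfere.

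As for comparison with the paper: the present paper does \emph{not} supply a proof of this theorem. It is quoted verbatim from \cite{H-T-W} and used as a tool in Section~5; no argument is given here. So there is nothing in this manuscript to compare your proof against. That said, the Rayleigh-quotient route you took is the natural one and is essentially how the result is established in the original source as well.
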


\begin{theorem}{\cite{H-T-W}}\label{T1}
Let G be a connected graph, then $\{\gamma^{(k)}\}_{k\geq 0}$ is an increasing sequence and $$\displaystyle \lim_{k\rightarrow \infty}\gamma^{(k)}=\lambda_{1}.$$
\end{theorem}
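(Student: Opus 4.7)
The plan is to recast the sequence $\{\gamma^{(k)}\}$ in matrix language so that monotonicity becomes a Cauchy--Schwarz inequality and convergence becomes the standard power-iteration behaviour of the non-negative symmetric matrix $A(G)$. First I would set $\mathbf{u}_k = A^{k}\mathbf{e}$. Since $d_0(i)=1$ and $d_{k+1}(i)=\sum_{j\in N_G(i)}d_k(j)$, an easy induction yields $d_k(i)=(A^{k}\mathbf{e})_i$, so
\[\sum_{i\in V(G)}d_k^{2}(i)=\|\mathbf{u}_k\|^{2}=\mathbf{e}^{T}A^{2k}\mathbf{e}.\]
Writing $S_j:=\mathbf{e}^{T}A^{j}\mathbf{e}$ (so $S_j\ge 0$ for every $j\ge 0$), the defining relations in (\ref{seq1}) collapse to $(\gamma^{(k)})^{2}=S_{2k+2}/S_{2k}$.

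For the monotonicity I would apply the Cauchy--Schwarz inequality to $\mathbf{u}_k$ and $\mathbf{u}_{k+2}$, noting that $\mathbf{u}_k^{T}\mathbf{u}_{k+2}=\mathbf{e}^{T}A^{2k+2}\mathbf{e}=S_{2k+2}$. This gives
\[S_{2k+2}^{2}\le\|\mathbf{u}_k\|^{2}\|\mathbf{u}_{k+2}\|^{2}=S_{2k}\,S_{2k+4},\]
which rearranges to $(\gamma^{(k)})^{2}\le(\gamma^{(k+1)})^{2}$, and hence $\gamma^{(k)}\le\gamma^{(k+1)}$.

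For the limit I would use the spectral decomposition $A=\sum_{i=1}^{n}\lambda_i v_i v_i^{T}$ in an orthonormal eigenbasis $\{v_i\}$ to obtain $S_j=\sum_{i=1}^{n}c_i\lambda_i^{j}$ with non-negative weights $c_i=(v_i^{T}\mathbf{e})^{2}$, so that
\[(\gamma^{(k)})^{2}=\frac{\sum_{i=1}^{n}c_i\lambda_i^{2k+2}}{\sum_{i=1}^{n}c_i\lambda_i^{2k}}.\]
Since $G$ is connected, $A(G)$ is non-negative and irreducible, so by Perron--Frobenius the eigenvector $v_1$ can be chosen with strictly positive entries, and therefore $c_1=(v_1^{T}\mathbf{e})^{2}>0$. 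Dividing numerator and denominator by $\lambda_1^{2k}$, only the indices with $\lambda_i^{2}=\lambda_1^{2}$ survive in the limit; each such index contributes $c_i\lambda_1^{2}$ on top against $c_i$ on the bottom, so $(\gamma^{(k)})^{2}\to\lambda_1^{2}$. Combined with the upper bound $\gamma^{(k)}\le\lambda_1$ from the previous theorem and the monotonicity just proved, this forces $\gamma^{(k)}\uparrow\lambda_1$.

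The only point that needs a moment's thought is the bipartite case, in which $-\lambda_1\in\sigma(G)$ with positive weight $c_n$ as well as $c_1>0$; however the ratio involves only even powers of the $\lambda_i$, so both dominant spectral terms contribute the same factor $\lambda_1^{2k}$ to numerator and denominator, and the limit calculation is unaffected. Thus I do not anticipate a real obstacle: the entire argument reduces to one application of Cauchy--Schwarz together with the standard power-iteration limit secured by Perron--Frobenius.
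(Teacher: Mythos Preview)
Your argument is correct: recasting $\gamma^{(k)}$ as the ratio $S_{2k+2}/S_{2k}$ with $S_j=\mathbf{e}^{T}A^{j}\mathbf{e}$, then obtaining monotonicity from Cauchy--Schwarz applied to $A^{k}\mathbf{e}$ and $A^{k+2}\mathbf{e}$, and the limit from the spectral expansion together with $c_1>0$ (Perron--Frobenius), is clean and complete; the bipartite remark about even powers is exactly what is needed.

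As for comparison with the paper: there is nothing to compare. The paper does not prove this theorem at all; it merely quotes it from \cite{H-T-W} and uses it as input for Theorem~\ref{Carmona-Robbiano3}. So you have supplied a self-contained proof where the paper relies on an external reference. Your approach is in fact the standard one in \cite{H-T-W}: the vectors $A^{k}\mathbf{e}$ are precisely the walk-count vectors, Cauchy--Schwarz gives the log-convexity of $k\mapsto S_{2k}$ (hence monotonicity of the ratios), and the limit is the usual power-method convergence for a symmetric irreducible non-negative matrix.
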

\begin{theorem}
\label{Carmona-Robbiano3}
Let $G$ be an $(n,m)$-graph with largest eigenvalue $\rho$ and $\eta(G)=\kappa.$ Let $G_1$ be an induced $(n_1,m_1)$-subgraph, that is, any connected component with spectral radius $\rho$ such that  $\frac{2m_1}{n_1}\geq 1$.
Let
 $\{\gamma_1^{(k)}\}_{k=0}^{\infty}$ be the increasing sequence defined in (\ref{seq1}) for $G_1$. Then $\{h(\gamma_1^{(k)})\}_{k=0}^{\infty}$ is an increasing sequence converging to $h (\rho)$ and, for all $ k\geq 0,$
\begin{equation}\label{C,R3}
    \mathcal{E}(G) \geq \gamma_1^{(k)}+n-\kappa-1+
    \ln{\left|\Upsilon_{n-\kappa}(G)\right|} -\ln \gamma_1^{(k)}.
\end{equation}
In particular, if $G_1$ is $r_1$-regular then
\begin{align}
\label{DasMGutmanCarmonaRobbiano9}
\mathcal{E}(G) \geq r_1+n-\kappa-1+\ln{\left|\Upsilon_{n-\kappa}(G)\right|}-\ln{r_1}.
\end{align}
If $\rho=1$ then equality holds if and only if  $G=\lfloor \frac{n-\kappa}{2}\rfloor K_2 \cup \kappa K_1.$ On the contrary, if $\rho >1 $ then equality holds if and only if $G=K_{n-\ell}\cup \kappa K_{1}\cup \lfloor\frac{\ell-\kappa}{2}\rfloor K_2$ with
$\kappa\leq \ell \leq n-3.$
\end{theorem}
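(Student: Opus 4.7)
The plan is to reduce the main inequality to Theorem \ref{rCI} together with the monotonicity of the real function $h(x):=x+n-\kappa-1+\ln|\Upsilon_{n-\kappa}(G)|-\ln x$. A direct computation gives $h'(x)=1-1/x$, which is non-negative on $[1,\infty)$, so $h$ is non-decreasing (in fact strictly increasing) and continuous on that interval. The whole argument will consist of checking that the sequence $\{\gamma_1^{(k)}\}$ lives entirely inside $[1,\rho]$ and then transporting the monotonicity of $h$ along this sequence.

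The second step is to verify that every term of $\{\gamma_1^{(k)}\}_{k\geq 0}$ lies in $[1,\rho]$. Applying the Cauchy--Schwarz inequality to the degree sequence of $G_1$ gives
$$\gamma_1^{(0)}=\sqrt{\frac{1}{n_1}\sum_{i\in V(G_1)} d^{2}(i)}\;\geq\;\frac{1}{n_1}\sum_{i\in V(G_1)} d(i)\;=\;\frac{2m_1}{n_1}\;\geq\;1,$$
where the last inequality is the standing hypothesis on $G_1$. Theorem \ref{T1} applied to the connected graph $G_1$ then supplies that $\{\gamma_1^{(k)}\}$ is non-decreasing and converges to the spectral radius of $G_1$, which coincides with $\rho$. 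Consequently $1\leq \gamma_1^{(k)}\leq \rho$ for all $k$, and because $h$ is non-decreasing and continuous on $[1,\infty)$, the sequence $\{h(\gamma_1^{(k)})\}$ is non-decreasing and converges to $h(\rho)$.

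For the main inequality (\ref{C,R3}), Theorem \ref{rCI} gives $\mathcal{E}(G)\geq h(\rho)$, and combining this with $h(\rho)\geq h(\gamma_1^{(k)})$ (from the monotonicity of $h$ just established) yields the desired bound for every $k\geq 0$. When $G_1$ is $r_1$-regular one has $d_k(i)=r_1^{k}$ for every $i\in V(G_1)$, hence $\gamma_1^{(k)}=r_1$ for all $k$, and (\ref{DasMGutmanCarmonaRobbiano9}) is simply the $k=0$ instance of (\ref{C,R3}). The equality discussion is inherited verbatim from Theorem \ref{rCI}: equality in (\ref{C,R3}) forces both $\gamma_1^{(k)}=\rho$ and equality in Theorem \ref{rCI}, which together pin down the graphs described in the statement (the unions of $K_2$'s and isolated vertices when $\rho=1$, and the $K_{n-\ell}\cup \kappa K_1 \cup \lfloor(\ell-\kappa)/2\rfloor K_2$ family when $\rho>1$).

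The only genuinely new ingredient beyond Theorems \ref{T1} and \ref{rCI} is the anchor $\gamma_1^{(0)}\geq 1$; without it $h$ need not be monotone at the starting point of the sequence and the telescoping argument above collapses. Once that anchor is in place the statement is a cosmetic sharpening of Theorem \ref{rCI} in which the constant lower bound $\rho$ is replaced by the computable approximating sequence $\gamma_1^{(k)}$, and the convergence $h(\gamma_1^{(k)})\to h(\rho)$ ensures that no information is lost in the limit.
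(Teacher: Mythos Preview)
Your proof is correct and follows essentially the same route as the paper: anchor $\gamma_1^{(0)}\geq 1$ via the hypothesis $2m_1/n_1\geq 1$, use Theorem~\ref{T1} to get $\gamma_1^{(k)}\nearrow\rho$, transport monotonicity and continuity of $h$ along the sequence, and invoke Theorem~\ref{rCI} both for the bound $\mathcal{E}(G)\geq h(\rho)$ and for the equality classification. The only cosmetic differences are that you compute $h'$ explicitly and use the QM--AM inequality for the anchor, whereas the paper bounds $\gamma_1^{(0)}\geq\sqrt{2m_1/n_1}$ via $\sum d^2(i)\geq\sum d(i)$; both are valid.
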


 \begin{proof}
 Observe that $\gamma_1^{(k)}\geq 1$, for all $ k\geq 0.$
This is an immediate consequence of Theorem \ref{T1} and that  $$\gamma_{1}^{(0)}=\sqrt{\frac{\sum_{i \in V(G_1)} d_1^2(i)}{n_{1}}} \geq \sqrt{\frac{2m_1}{n_1}} \geq 1.$$ Since  $\{\gamma_1^{(k)}\}_{k=0}^{\infty}$ is an increasing sequence and converges to $\rho$ then, the first statement follows from the continuity of $h$. If equality holds in (\ref{C,R3}), then for all nonzero eigenvalue $\lambda$ (that is not equal to the largest eigenvalue of $G$) the equality $\left|\lambda\right|=1+\ln{\left|\lambda\right|}$ occurs only when $\left|\lambda\right|=1$ implying that $\lambda=\pm1.$ Additionally, if the equality occurs, $h(\gamma_{1}^{(k)})=\mathcal{E}(G)\geq h(\rho) \geq  h(\gamma_1^{(k)}).$ Therefore, $\mathcal{E}(G)= h(\rho) = h(\gamma_1^{(k)}), $ and we are in the conditions of Theorem \ref{rCI}. Therefore $G$ is as in the statement.
The inequality in (\ref{DasMGutmanCarmonaRobbiano9}) follows from the fact if $G_1$ is a $r_1$-regular graph, then $\gamma_1^{(k)}=r_1$, for all $k\geq 0.$
 \end{proof}

\noindent Recalling the result in (\ref{2}) obtained in \cite{D-M-G},
the result given in \cite{J} is here re-obtained considering $\kappa=0.$
\begin{corollary}
Let $G$ be a connected nonsingular graph of order $n$. Define
the sequence $\{\gamma^{(k)}\}_{k=0}^{\infty}$ as in (\ref{seq1}). Then
\begin{equation}\label{e1}
    \mathcal{E}(G) \geq \gamma^{(k)}+n-1+\ln|\det A|-\ln \gamma^{(k)},
\end{equation}
with $k\geq 0$
with equality if and only if $G=K_{n-\ell}\cup \lfloor\frac{\ell}{2}\rfloor K_2$ with
$0\leq \ell \leq n-2.$
\end{corollary}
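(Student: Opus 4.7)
The plan is to specialize Theorem \ref{Carmona-Robbiano3} to the setting where $\kappa=0$ and $G$ is connected, so that the principal connected component with spectral radius $\rho$ is $G$ itself: take $G_1=G$, $n_1=n$, $m_1=m$. The hypothesis $2m_1/n_1\geq 1$ is automatic for any connected graph on $n\geq 2$ vertices, since $m\geq n-1$ forces $2m\geq n$. Nonsingularity ($\kappa=0$) then gives $\Upsilon_{n-\kappa}(G)=\Upsilon_{n}(A(G))=\det A$, and substituting into (\ref{C,R3}) yields (\ref{e1}) directly.

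For the monotonicity in $k$, I would recycle the argument used inside the proof of Theorem \ref{Carmona-Robbiano3}: Theorem \ref{T1} provides $\gamma^{(k)}\uparrow \lambda_{1}$, while
\[
\gamma^{(0)}=\sqrt{\tfrac{1}{n}\sum_{i}d^{2}(i)}\geq \sqrt{2m/n}\geq 1,
\]
and the auxiliary function $h(x)=x+n-1+\ln|\det A|-\ln x$ is strictly increasing on $[1,\infty)$. Composition then makes $\{h(\gamma^{(k)})\}_{k\geq 0}$ an increasing sequence of lower bounds converging to $h(\lambda_{1})$, which is the quantity already known to bound $\mathcal{E}(G)$ from below by Theorem \ref{rCI}.

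For the equality case I would read off the characterization in Theorem \ref{Carmona-Robbiano3} with $\kappa=0$ and then impose nonsingularity. When $\rho=1$ the extremal shape $\lfloor n/2\rfloor K_{2}$ is nonsingular iff $n$ is even, corresponding to $\ell=n-2$ in the stated family. When $\rho>1$ the shape $K_{n-\ell}\cup \lfloor \ell/2\rfloor K_{2}$ with $0\leq \ell\leq n-3$ is nonsingular iff $\ell$ is even, so that no isolated $K_{1}$ appears from an unmatched half-edge. Combined, these give exactly the family $K_{n-\ell}\cup \lfloor \ell/2\rfloor K_{2}$, $0\leq \ell\leq n-2$, in the statement, and for $\kappa=0$ one recovers in particular the result of \cite{J}.

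The step I expect to require the most care is reconciling the hypothesis ``$G$ connected'' with the predominantly disconnected extremal family. Read strictly, connectedness admits only $\ell=0$, giving the classical $G=K_{n}$ case of \cite{D-M-G}; to justify the broader family one relaxes connectedness (as Theorem \ref{Carmona-Robbiano3} already allows) and checks sharpness on each $K_{n-\ell}\cup \lfloor \ell/2\rfloor K_{2}$ by a direct computation: $\gamma^{(k)}$ evaluated on the $K_{n-\ell}$ component equals $n-\ell-1$ for every $k\geq 0$ by regularity, and multiplying the determinants of the $K_{n-\ell}$ and $K_{2}$ blocks together with the logarithmic identity $|\lambda|=1+\ln|\lambda|$ at $|\lambda|=1$ yields equality in (\ref{e1}). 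All other steps are formal specializations of results already proved.
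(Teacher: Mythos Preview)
Your proposal is correct and follows the same route as the paper: the corollary is obtained by setting $\kappa=0$ in Theorem~\ref{Carmona-Robbiano3} (the paper says only that the result is ``re-obtained considering $\kappa=0$'' and gives no further argument). Your additional care in verifying $2m/n\geq 1$, identifying $\Upsilon_{n}(A)=\det A$, and tracking the equality case is all sound, and you are right to flag the tension between the hypothesis ``$G$ connected'' and the disconnected extremal family $K_{n-\ell}\cup\lfloor \ell/2\rfloor K_{2}$ for $\ell\geq 1$; this is an imprecision in the statement as written (strict connectedness forces $\ell=0$, recovering only $K_{n}$), and the paper does not address it.
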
\label{Tg2}

\section*{References}

\end{document}